\newtheorem{thm}{Theorem}[section]
\newtheorem{lem}[thm]{Lemma}
\newtheorem{prop}[thm]{Proposition}
\theoremstyle{definition}
\newtheorem{defn}[thm]{Definition}
\theoremstyle{remark}
\newtheorem{rem}[thm]{\bf Remark}
\numberwithin{equation}{section}
\newcommand{\ds}{\displaystyle}
\newcommand\ea{\begin{equation}}
\newcommand\ez{\end{equation}}
\newcommand\eas{\begin{equation*}}
\newcommand\ezs{\end{equation*}}
\def\split{\begin{array}{ll}}
\def\endsplit{\end{array}}
\def\m{(m)}
\newcommand{\C}{\mathbb C}
\newcommand{\cR}{\mathcal R}
\def\ol{\overline}
\def\pa{\partial}
\newcommand{\al}{\alpha}
\def\gr{\nabla}
\def\pa{\partial}
\newcommand{\q}{\hskip 20pt}
\def\s{\sigma}
\newcommand{\wpsi}{\widetilde{\psi}}
\newcommand{\BP}{\vskip 10pt}
\newcommand{\SP}{\vskip 5pt}
\newcommand{\wa}{\widetilde{a}}
\newcommand{\wb}{\widetilde{b}}
\newcommand{\wc}{\widetilde{c}}
\begin{document}

\title[Dirac operators]{A sequence of zero modes of Weyl-Dirac operators 
and an associated sequence of solvable polynomials
}

\thanks{${}^{\ddag}$ {\scriptsize Supported by 
     Grant-in-Aid for Scientific Research (C) 
    No.  21540193, 
   Japan Society for the Promotion of Science.}}

\maketitle
  \centerline{\it Dedicated to Professor Edmunds and Professor Evans}
  \centerline{\it on the occasion of their eightieth and seventieth birthdays}
  
\vspace{30pt}

\centerline{\author{Yoshimi Sait\={o}${}^{\dag}$ and 
Tomio Umeda${}^{\ddag}$}}

\vspace{6pt}

\small{
\centerline{${}^{\dag}$\it Department of Mathematics, University of Alabama at Birmingham} 
 
\centerline{\it Birmingham, AL 35294, USA}
 
\vspace{6pt}

\centerline{${}^{\ddag}$\it Department of Mathematical Sciences,
University of Hyogo}
\centerline{\it Himeji 671-2201, Japan}

\vspace{6pt}
\centerline{${}^{\dag}$\it saito@math.uab.edu}

\vspace{4pt}
\centerline{${}^{\ddag}$\it umeda@sci.u-hyogo.ac.jp}
}

\vspace{12pt}

\begin{quote}
{\small {\bf Abstract.} It is shown that
a series of solvable polynomials is attached to
the series of zero modes constructed by Adam, Muratori and Nash 
\cite{AdamMuratoriNash1}.

\vspace{8pt}
\noindent
{\it Keywords}: Weyl-Dirac operators, magnetic potentials, zero modes,
solvable polynomials.

\vspace{8pt}
\noindent
Mathematics Subject Classification 2000: 35Q40, 35P99, 11R09

}
\end{quote}

\vskip 30pt

\section{\bf Introduction}

The aim of this note is to point out
 an interesting and unpredictable connection
between zero modes and solvable polynomials. 
We shall precisely explain our aim. 

To this end, we first introduce a Weyl-Dirac
operator
\begin{equation}\label{1weylD}     
  H_A = \s\cdot(D - A) = \sum_{k=1}^3 \s_k(D_k - A_k(x)),
\end{equation}
where $\s= (\s_1, \, \s_2, \, \s_3)$ is
the triple of  $2 \times 2$ Pauli matrices
\begin{equation*}\label{eqn:1-3}
\sigma_1 =
\begin{pmatrix}
0&1 \\ 1& 0
\end{pmatrix}, \,\,\,
\sigma_2 =
\begin{pmatrix}
0& -i  \\ i&0
\end{pmatrix}, \,\,\,
\sigma_3 =
\begin{pmatrix}
1&0 \\ 0&-1
\end{pmatrix},
\end{equation*}
$A(x) = (A_1(x), A_2(x), A_3(x))$ is a  vector
potential, and 
\begin{equation*}
 D =  -i\gr
   = 
 \Big(-i\frac{\pa}{\pa x_1}\,,\, -i\frac{\pa}{\pa x_2}\,,\, 
  -i\frac{\pa}{\pa x_3}\Big).
\end{equation*}
If each component of the vector potential $A$ is a bounded measureable function,
then the operator $\sigma \cdot A$ is a bounded self-adjoint operator in
the Hilbert space $\mathcal L^2 := [L^2({\mathbb R}^3)]^2$. 
Hence it is straightforward that
the Weyl-Dirac operator $H_A$ defines
the unique self-adjoint realization in  $\mathcal L^2$
and its domain is given as 
  $\mbox{Dom}(H_A)=\mathcal H^1:= [H^1({\mathbb R}^3)]^2$ 
whenever $A_j \in L^{\infty}({\mathbb R}^3)$. Here $H^1({\mathbb R}^3)$ denotes
the Sobolev space of order 1.

\begin{defn}
If $\psi \in \mbox{Ker}(H_A)$, then $\psi$ is called 
a zero mode of $H_A$. In other words,
$\psi$ is said to be  a zero mode
if and only if $\psi \in \mbox{Dom}(H_A)$ and $H_A \psi = 0$.
\end{defn}

We should remark that the Weyl-Dirac operator
is intimately related with the Pauli operator
\begin{equation*}
P_A= \sum_{j=1}^3 ( D_j - A_j)^2 - \sigma \cdot B,
\end{equation*}
 where $B$ denotes the magnetic field given
 by $B=\nabla \times A$. 
 This is because 
 \begin{equation*}
 P_A = \{ \sigma \cdot (D - A) \}^2 = H_{\!A}^2
 \end{equation*}
 in a formal sense.
Roughly speaking, we can say that $\psi$ is a zero mode
of the Weyl-Dirac operator $H_A$ 
if and only if it is a zero mode of the Pauli operator
$P_A$. 

It is now well understood that
the existence of magnetic fields which give rise to zero
modes of the Weyl-Dirac operators has significant 
implications in mathematics and physics
(see \cite{AdamMuratoriNash1}, \cite{AdamMuratoriNash2}, 
\cite{AdamMuratoriNash3}, \cite{BalinEvan1},
\cite{BalinEvan2}, \cite{BalinEvan3}, \cite{BalinskyEvansUmeda},
\cite{Elton-1}, \cite{Elton},
\cite{ErdosSolovej},
\cite{FrohlichLiebLoss}, 
\cite{LossYau},  \cite{Pickl}, \cite{PicklDurr},
\cite{PicklDurr2}, \cite{SaitoUmeda1}).
However, Balinsky and Evans \cite{BalinEvan1, BalinEvan2, BalinEvan3}
and Elton \cite{Elton}
showed that
the set of vector potentials which yield zero modes
is scarce in a certain sense.

Vector potentials which give rise to zero modes do exist.
The first examples of such vector potentials were
given by Loss and Yau \cite{LossYau}. Later Adam, Muratori and Nash 
\cite{AdamMuratoriNash1, AdamMuratoriNash2, AdamMuratoriNash3} and
Elton \cite{Elton-1} constructed further examples of
zero modes, using and developing the ideas from \cite{LossYau}.
The works \cite{ErdosSolovej} by Erd\"os and Solovej generalize
all these examples.

The basic idea of Loss and Yau \cite{LossYau}
is to find a solution of the Loss-Yau equation 
\begin{equation}   \label{eqn:LossYauEqn}
          (\s\cdot D)\psi(x) = h(x)\psi(x),
\end{equation}
where $h$ is a given (real-valued) function, and then
to define a vector potential $A$ so that $\psi$ satisfies the 
equation
$\sigma \cdot (D -A)\psi=0$.
Precise statement of their idea is the following.

\begin{prop}   \label{prop:LY}                     
        {\rm (Loss-Yau \cite{LossYau})}. 
Let $\psi \in {\mathcal H}^1$ be
a solution to the Loss-Yau equation {\rm(\ref{eqn:LossYauEqn})}
with a real valued bounded function $h$.
Then $\psi$ is a zero mode of 
the Weyl-Dirac operator $H_A$ 
with the vector potential defined  by
\begin{equation}  \label{eqn:LossYau1}
  A(x) 
= \frac{h(x)}{|\psi(x)|^2}
\big( \psi(x)\cdot\s_1\psi(x), \, \psi\cdot\s_2\psi(x),\,
                                         \psi\cdot\s_3\psi(x) \big),
\end{equation}
 where, for $a = {}^t(a_1, a_2), b = {}^t(b_1, b_2) \in \C^2$, $a\cdot b$ denotes the
inner product:
\begin{equation*}
            a \cdot b = \ol{a}_1\,b_1 + \ol{a}_2\,b_2.
\end{equation*}
\end{prop}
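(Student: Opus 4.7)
The plan is to reduce the zero mode equation $\sigma\cdot(D-A)\psi=0$ to the identity $(\sigma\cdot A)\psi=h\psi$, which, given the Loss--Yau equation $(\sigma\cdot D)\psi=h\psi$, is all that needs to be verified. Thus the entire proposition collapses to an algebraic identity about Pauli matrices acting on spinors, independent of any analysis or differentiation.

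To execute this, I would first introduce, for a spinor $\psi={}^t(a,b)\in\C^2$, the real vector $N(\psi)=(\psi\cdot\sigma_1\psi,\,\psi\cdot\sigma_2\psi,\,\psi\cdot\sigma_3\psi)\in\R^3$, so that the vector potential in \eqref{eqn:LossYau1} reads $A=h\,N(\psi)/|\psi|^2$. A direct computation of the components gives $\psi\cdot\sigma_1\psi=2\Re(\bar a b)$, $\psi\cdot\sigma_2\psi=2\Im(\bar a b)$, $\psi\cdot\sigma_3\psi=|a|^2-|b|^2$, and one immediately reads off $|N(\psi)|=|\psi|^2$. Writing out the $2\times 2$ matrix $\sigma\cdot N(\psi)$ and applying it to $\psi$, both components reduce to $(|a|^2+|b|^2)a$ and $(|a|^2+|b|^2)b$ respectively; that is, $(\sigma\cdot N(\psi))\psi=|\psi|^2\psi$. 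This pointwise identity is the true content of the proposition.

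From here the proof essentially writes itself: wherever $\psi(x)\neq 0$, dividing by $|\psi(x)|^2$ and multiplying by $h(x)$ yields $(\sigma\cdot A(x))\psi(x)=h(x)\psi(x)$; on the (measure-theoretically irrelevant) set where $\psi(x)=0$ we may define $A(x)$ arbitrarily, as both sides of the desired identity vanish there. Combining with the hypothesis $(\sigma\cdot D)\psi=h\psi$, we obtain $H_A\psi=\sigma\cdot(D-A)\psi=0$ in the sense of $\mathcal L^2$. Since $|A(x)|\le |h(x)|$ by the identity $|N(\psi)|=|\psi|^2$, the vector potential $A$ is bounded, so $H_A$ is indeed the self-adjoint operator on $\mathcal H^1$ described in the introduction, and $\psi\in\mathcal H^1$ genuinely lies in its domain.

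The only step that requires any real work is the spinor identity $(\sigma\cdot N(\psi))\psi=|\psi|^2\psi$; everything else is bookkeeping. I expect this to be the main (and essentially the only) obstacle, but it is a routine calculation with $2\times 2$ matrices and not a conceptual difficulty. A mild subtlety worth flagging in the write-up is the behavior at zeros of $\psi$, since $A$ is defined by division by $|\psi|^2$; the bound $|A|\le|h|$ shows this causes no trouble.
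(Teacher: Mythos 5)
Your argument is correct and is exactly the standard Loss--Yau computation: the pointwise spinor identity $(\sigma\cdot N(\psi))\psi=|\psi|^2\psi$ (with $N(\psi)$ the spin-density vector), which yields $(\sigma\cdot A)\psi=h\psi$ and hence $H_A\psi=0$, together with $|A|\le|h|$ so that $A$ is bounded and $\psi\in\mathcal H^1=\mbox{Dom}(H_A)$. The paper itself gives no proof of this proposition (it is quoted from \cite{LossYau}), and your write-up, including the handling of the zero set of $\psi$, supplies precisely the intended argument.
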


In \cite{LossYau}, choosing 
\begin{equation}   \label{eqn:LossYau2}
\psi(x)= \langle x \rangle^{-3} (I_2 + i\sigma \cdot x) \phi_0
\qquad (\langle x \rangle=\sqrt{1+|x|^2} \,),
\end{equation}
where $I_2$ is a $2\times 2$ unit matrix and
 $\phi_0 \in \C^2$ a unit
vector, they showed that $\psi$ defined by (\ref{eqn:LossYau2})
satisfies the Loss-Yau equation (\ref{eqn:LossYauEqn}) with
\begin{equation}  \label{eqn:LossYau3}
h(x)= \frac{3}{\langle x \rangle^2}.  
\end{equation}
It follows from (\ref{eqn:LossYau1}) and (\ref{eqn:LossYau2})
that
\begin{align}      \label{eqn:LY-1}
A(x) = 3 \langle x \rangle^{-4} 
  \big\{ (1-|x|^2)  w_0 + 2 (w_0 \cdot x) x + 2 w_0 \times x \big\},
\end{align}
where
\begin{equation}      \label{eqn:LY-1-1}
w_0=  \phi_0 \cdot (\sigma \phi_0 )
:=\big( 
\phi_0 \cdot(\sigma_1 \phi_0), \, 
  \phi_0 \cdot (\sigma_2 \phi_0), \,
\phi_0 \cdot\sigma_3 \phi_0)
 \big),
\end{equation}
and $w_0 \cdot x$ and $w_0 \times x$ denote
the inner product and the exterior product respectively.
Proposition \ref{prop:LY}  implies that
$\psi$ defined by (\ref{eqn:LossYau2}) is a zero mode
of the Weyl-Dirac operator $H_A$ 
with the vector potential (\ref{eqn:LY-1}).

Adam, Muratori and Nash \cite{AdamMuratoriNash1} exploited 
the idea of Proposition \ref{prop:LY}, and successfully 
constructed a series  $\big\{ \psi^{(m)} \big\}_{m=1}^{\infty}$,
each of which satisfies the Loss-Yau equation (\ref{eqn:LossYauEqn})
with
\begin{equation}  \label{eqn:LossYau4}
h^{(m)}(x)= \frac{2m +3}{\langle x \rangle^2} \qquad (m=1, \, 2, \, \cdots).  
\end{equation}
It is obvious that each $ \psi^{(m)}$ is a zero mode
of the Weyl-Dirac operator $H_{A^{(m)}}$
with the vector potential
\begin{equation*}  \label{eqn:LossYau5}
  A^{(m)}(x) 
= \frac{h^{(m)}(x)}{|\psi^{(m)}(x)|^2}
\big( \psi^{(m)}(x)\cdot\s_1\psi^{(m)}(x), \, \psi^{(m)}\cdot\s_2\psi^{(m)}(x),\,
                  \psi^{(m)}\cdot\s_3\psi^{(m)}(x) \big).
\end{equation*}

The goal of this note is to show that 
a polynomial $P_m(t)$ of degree $m+1$ is associated with
each zero mode $\psi^{(m)}$ in such a way that
the polynomial equation $P_m(t)=0$ is solvable and
 all of the roots of this equation  
determine a set of zero modes, one of which is designated as  $\psi^{(m)}$. 
Obviously, as $m$ gets larger, it will become more difficult
to solve the equation $P_m(t)=0$. 
It is well-known \cite{Galois} that \lq\lq{\it there is 
 no formula for the roots of a fifth (or higher) degree polynomial equation in terms of
the coefficients of the polynomial, using only the usual algebraic operations (addition,
subtraction, multiplication, division) and application of radicals 
{\rm(}square roots, cube roots, etc{\rm)}.}\rq\rq
\ Here are the first six equations of $P_m(t)=0$:
\begin{align}
{}& P_1(t) = 0 \Longleftrightarrow 9t^2 - 34t + 25 = 0,  \nonumber\\
\noalign{\vskip 2pt}
{}& P_2(t) = 0 \Longleftrightarrow 81t^3 - 747t^2 + 1891t - 1225 = 0, 
\nonumber\\
\noalign{\vskip 2pt}
{}& P_3(t) = 0  \Longleftrightarrow 81t^4 -1476t^3+8614t^2-18244t+11025= 0,  \nonumber\\
\noalign{\vskip 2pt}
{}& P_4(t)=0  \Longleftrightarrow  729t^5 -23085t^4 +256122t^3 -1206490t^2  \nonumber\\
{}& \qquad \qquad\qquad \qquad  \qquad\qquad \qquad  \qquad        
+2306749t -1334025=0,  \nonumber    \\
\noalign{\vskip 2pt}
{}& P_5(t)=0  \Longleftrightarrow
  6561t^6 -330966t^5 + 6206463t^4  -54143028t^3 \nonumber \\
{}& \qquad \qquad\qquad \qquad  \qquad
 + 224657551t^2 - 401846806t + 225450225  =0,  \nonumber  \\
\noalign{\vskip 2pt}
{}& P_6(t)=0  \Longleftrightarrow
 6561t^7 -494991t^6 +14480613t^5 -209304603t^4 +1578233251t^3 \nonumber \\
{}& \qquad \qquad\qquad \qquad  \qquad \qquad
 -6018285581t^2+10271620375t -5636255625=0.  \nonumber
\end{align}

It is incredable to see that all these polynomial equations are solvable.
Actually, by computer-aided calculation we see that
\begin{align}
{}& P_1(t) = 0 \Longleftrightarrow t=1, \, \frac{25}{9}   \nonumber\\
\noalign{\vskip 4pt}
{}& P_2(t) = 0 \Longleftrightarrow  
t=1, \,   \frac{25}{9} , \, \frac{49}{9} , 
\nonumber\\
\noalign{\vskip 4pt}
{}& P_3(t) = 0  \Longleftrightarrow 
t=1, \,   \frac{25}{9} , \,  \frac{49}{9} , 
 \, 9,  \nonumber\\
\noalign{\vskip 4pt}
{}& P_4(t)=0  \Longleftrightarrow  
t=1, \,  \frac{25}{9} , \, \frac{49}{9} , 
 \, 9, \,   \frac{121}{9} ,  \nonumber\\
\noalign{\vskip 4pt}
{}& P_5(t)=0  \Longleftrightarrow
t=1, \,   \frac{25}{9} , \, \frac{49}{9} , 
 \, 9, \,    \frac{121}{9} ,  
\,    \frac{169}{9} ,\nonumber  \\
\noalign{\vskip 4pt}
{}& P_6(t)=0  \Longleftrightarrow
t=1, \,    \frac{25}{9} , \, \frac{49}{9} , 
 \, 9, \,   \frac{121}{9} ,  
\,   \frac{169}{9} , \, 25.  \nonumber
\end{align}
Based on this observation, it is natural
to predict that 
the roots of the equation $P_m(t)=0$ must be given by
\begin{equation}  \label{eqn:roots-1} 
1, \, \Big( \frac{5}{3} \Big)^2, \, \Big( \frac{7}{3} \Big)^2, \,
\cdots, \, \Big( \frac{2m+3}{3} \Big)^2
\end{equation}
for every $m \in \mathbb N$.
This prediction will be proven to be true in \S \ref{sec:Conjecture},
though we should like to mention that these 
polynomials $P_m(t)$ will be only implicitly defined  
in a rather messy manner; see the formula ($L_m$) in
Proposition \ref{prop:Prop2.1}  as well as   Proposition \ref{prop:Prop2.2} 
in \S \ref{sec:Recurrence}. 
In relation with this, we emphasize that
 the bigger $m$ gets, the more complicated
$P_m(t)$ becomes, as can be seen from  $P_1(t)$, $\cdots$, $P_6(t)$ above.

We should like to call  $P_m(t)$ in their monic forms
the {\it Adam-Muratori-Nash polynomials}. 
We feel solvability of $P_m(t)$ seems an interesting subject
from the view point of Galois theory (see Edwards \cite{Edwards}),
though it is well beyond the scope of the present note.

\vspace{10pt}

\section{\bf Recurrence formulae}  
\label{sec:Recurrence}

 In this section we follow the line of the arguments
demonstrated in Adam-Muratori-Nash \cite{AdamMuratoriNash1}.
For this reason, we shall use the same notation as in 
 \cite{AdamMuratoriNash1} 
to indicate
$I_2$ and $i\sigma \cdot x$ in the rest of this note ; namely
\begin{equation} \label{eqn:amn1}
{\mathbf 1}= I_2, \ \ {\mathbf X} = i\sigma \cdot x .
\end{equation}
Their construction of the zero modes  is based on the following
ansatz:
\begin{equation}\label{1psim}                                                        
    \psi^{(m)}(x) 
= \langle     x \rangle  ^{-(3+2m)}\Big[\Big(\sum_{n=0}^{m} a_n|x|^{2n}\Big){\mathbf 1} 
                        + \Big(\sum_{n=0}^{m} b_n|x|^{2n}\Big){\mathbf X}\Big]\phi_0,
\end{equation}
   where $\langle     x \rangle   = \sqrt{1 + |x|^2}$, 
 and $\phi_0 = {}^t(1, 0)$. We are going to study the
     case where  $h(x)$ in (\ref{eqn:LossYauEqn})  is  $\langle     x \rangle  ^{-2}$ 
multiplied by  a constant $\al$. By a simple but tedious computation we
have

\begin{prop}                                                         
\label{prop:Prop2.1}
        Let $\psi^{\m}(x)$ be as above with $a_0 = 1$.

\SP

        {\rm (i)} Then, for each $m = 1,\, 2, \, 3, \,\cdots,$ we have
\begin{multline}\label{1hapsi}                                               \qquad    (\s\cdot D)\psi^{\m} \\
      = \langle     x \rangle  ^{-(5+2m)}\sum_{n=0}^m \Big[ (2m + 3)b_n|x|^{2n} \hskip 80pt \\
      \ds \hskip 130pt                  - (2m - 2n)b_n|x|^{2(n+1)}\Big]{\mathbf 1}\phi_0 \\
      \ds \hskip 50pt + \langle     x \rangle  ^{-(5+2m)}\Big[\sum_{n=0}^{m-1} \{(3+2m)a_n
                                                          - 2(n+1)a_{n+1}\}|x|^{2n}  \\
      \ds \hskip 20pt + (3 + 2m)a_m|x|^{2m} - \sum_{n=1}^{m-1} 2(n+1)a_{n+1}|x|^{2(n+1)}\Big]
   {\mathbf X}\phi_0.
\end{multline}

\SP
        {\rm (ii)} The equation
\begin{equation}\label{1WDeq}                                                           (\s\cdot D)\psi^{\m}(x) = \frac{\alpha}{\langle     x \rangle  ^2}\psi^{\m}(x)
\end{equation}
     where $\alpha$ is a constant, is equivalent to the system
\begin{equation*}
(L_m) \quad 
\begin{cases}
       (2j - 1): \ \ \ \ \, 2ja_{j} - (2m + 5 - 2j)a_{j-1} = - 3b_0b_{j-1}  \\
       \hskip 172pt                                    (1 \le j \le m, a_0 = 1), \\
       (2k): \ \ \ \ \ \ \ \ \ (2k + 3)b_k - (2m + 2 - 2k)b_{k-1} = 3b_0a_k \\
       \hskip 230pt                                                 (1 \le k \le m), \\
       (2m + 1): \ \ \ \,  a_{m} = b_0b_{m}
\end{cases}
\qquad\qquad\qquad           
\end{equation*}
     of $(2m + 1)$ equations for the $(2m + 1)$ unknowns $a_j \ (1 \le j \le m)$
     and $b_k \ (0 \le k \le m)$, where the constant $\alpha$ turns out to be $3b_0$.
\end{prop}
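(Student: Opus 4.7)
The proposition is fundamentally a careful bookkeeping exercise built on three basic identities, which I would state at the outset: (a) $(\sigma\cdot a)(\sigma\cdot b)=(a\cdot b)\mathbf{1}+i\sigma\cdot(a\times b)$, which collapses to $(a\cdot b)\mathbf{1}$ whenever $a\parallel b$, and in particular when $a=\nabla f$ with $f$ radial and $b=x$; (b) the identity $(\sigma\cdot D)\mathbf{X}=-i\sum_k\sigma_k\partial_k(i\sigma\cdot x)=3\mathbf{1}$; and (c) the elementary derivatives $\nabla\langle x\rangle^{-(3+2m)}=-(3+2m)\langle x\rangle^{-(5+2m)}x$ and $\nabla|x|^{2n}=2n|x|^{2n-2}x$. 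I will also use the algebraic rewriting $\langle x\rangle^{-(3+2m)}=\langle x\rangle^{-(5+2m)}(1+|x|^{2})$ to unify the weight at the end.

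For part (i), I split $\psi^{(m)}=\rho(P\mathbf{1}+Q\mathbf{X})\phi_{0}$ with $\rho=\langle x\rangle^{-(3+2m)}$, $P=\sum a_{n}|x|^{2n}$, $Q=\sum b_{n}|x|^{2n}$, and treat the two summands separately. Since $\rho P$ is radial, $\nabla(\rho P)\parallel x$, and the product rule together with $-i(\sigma\cdot x)=-\mathbf{X}$ shows that $(\sigma\cdot D)(\rho P\,\mathbf{1}\phi_{0})$ is of the form $(\text{scalar})\mathbf{X}\phi_{0}$. For the other summand, the product rule gives $(\sigma\cdot D)(\rho Q\,\mathbf{X}\phi_{0})=-i(\sigma\cdot\nabla(\rho Q))\mathbf{X}\phi_{0}+\rho Q(\sigma\cdot D)\mathbf{X}\phi_{0}$; by (b) the second piece equals $3\rho Q\mathbf{1}\phi_{0}$, and by (a) applied to $\nabla(\rho Q)\parallel x$ together with $\mathbf{X}=i\sigma\cdot x$, the first piece reduces to $(\nabla(\rho Q)\cdot x)\mathbf{1}\phi_{0}$, so this whole contribution is of the form $(\text{scalar})\mathbf{1}\phi_{0}$. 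Substituting the expression for $\rho$, expanding in powers of $|x|^{2n}$, and collecting gives the result; the three-sum form for the $\mathbf{X}$-part arises naturally once the extreme powers $|x|^{0}$ and $|x|^{2m}$ are separated from the interior range $1\le n\le m-1$.

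For part (ii), substituting the formula from (i) into the Loss--Yau equation, multiplying through by $\langle x\rangle^{5+2m}$, and equating the $\mathbf{1}$- and $\mathbf{X}$-coefficients of each power $|x|^{2n}$ produces one scalar equation per power. The $\mathbf{1}$-equation at $n=0$ reads $3b_{0}=\alpha a_{0}=\alpha$, forcing $\alpha=3b_{0}$. With this value fixed, the $\mathbf{1}$-equation at $|x|^{2k}$ for $1\le k\le m$ is exactly equation $(2k)$ of $(L_{m})$; the $\mathbf{X}$-equation at $|x|^{2(j-1)}$ for $1\le j\le m$ rearranges into equation $(2j-1)$ after transposing signs; and the $\mathbf{X}$-equation at $|x|^{2m}$, namely $3a_{m}=\alpha b_{m}=3b_{0}b_{m}$, is equation $(2m+1)$. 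The only remaining coefficient---the $\mathbf{1}$-part at $|x|^{2m+2}$---vanishes identically, since the relevant factor $2m-2m$ is zero, so no extra constraint is produced and the equivalence with the system $(L_{m})$ is exact.

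The only genuine obstacle is arithmetic vigilance during the collection step in part (i): several sums run over slightly different ranges and are offset by one in the exponent, so an inattentive reindexing easily misplaces a $2n$ or $2(n+1)$ factor. The cleanest approach is to write the raw identity without any reindexing first, split off the boundary terms at $n=0$ and $n=m$, and only then regroup the remaining interior indices to match the displayed form.
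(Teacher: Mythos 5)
The paper offers no proof of this proposition at all --- it is introduced with the words ``by a simple but tedious computation'' --- so your outline supplies an argument where the authors give none. Your method is the natural one and it is sound: the identities $(\sigma\cdot a)(\sigma\cdot b)=(a\cdot b)\mathbf 1+i\sigma\cdot(a\times b)$ and $(\sigma\cdot D)\mathbf X=3\,\mathbf 1$, together with the Leibniz rule and the fact that gradients of radial functions are parallel to $x$ (so no cross-product terms survive), do reduce everything to collecting powers of $|x|^{2}$. Your accounting in part (ii) --- the constant $\mathbf 1$-coefficient fixing $\alpha=3b_{0}$, the higher $\mathbf 1$-coefficients giving the even-numbered equations, the $\mathbf X$-coefficients giving the odd-numbered ones and the top equation $a_{m}=b_{0}b_{m}$, and the identically vanishing $|x|^{2m+2}$ coefficient --- is exactly right and yields the correct count of $2m+1$ equations for $2m+1$ unknowns.

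One thing you must make explicit: carried out honestly, your computation does \emph{not} reproduce formula (\ref{1hapsi}) as printed. The $\mathbf 1$-part comes out as $\sum_{n=0}^{m}\big[(2n+3)b_{n}|x|^{2n}-(2m-2n)b_{n}|x|^{2(n+1)}\big]$, with $(2n+3)$ in place of the printed $(2m+3)$, and the final sum in the $\mathbf X$-part should start at $n=0$ rather than $n=1$. Both corrections can be checked against the $m=1$ zero mode (\ref{eqn:AMN-3-SaUmRmk}), or against the system $(L_{m})$ itself: the constant $\mathbf 1$-coefficient must be $3b_{0}$ in order that $\alpha=3b_{0}$, and the $|x|^{2}$ coefficient of the $\mathbf X$-part must contain a $-2a_{1}$ in order to produce equation $(3)$, namely $4a_{2}-(2m+1)a_{1}=-3b_{0}b_{1}$. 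Your part (ii) tacitly uses these corrected coefficients --- your claim that the $n=0$ equation reads $3b_{0}=\alpha$ is true only for the corrected formula --- so you are in effect proving a repaired version of (i) and the stated version of (ii). That is the right thing to do, but you should flag the discrepancy rather than assert that collecting terms ``gives the result.''
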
                                                            

        It is easy to see that $\{ a_n \}_{n=1}^m$ and $\{ b_n \}_{n=1}^m$ can be expressed
     by $b_0$ and the last equation $a_m = b_0b_m$ becomes 
     a polynomial equation for unknown $b_0$.

\begin{prop}                                                            
\label{prop:Prop2.2}  
        The last equation $(2m + 1)$ of  
        {\rm(}$L_m${\rm)\, : \;}$a_{m} = b_0b_{m}$ takes 
        the form $P_m(b_0^2) = 0$,
     where $P_m(t)$ is a polynomial of degree $m+1$.
\end{prop}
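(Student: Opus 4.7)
The plan is to track, by induction on $j$ and $k$, the parity in the unknown $b_0$ of each coefficient appearing in the ansatz \eqref{1psim}. Concretely, I would establish the following \emph{parity dichotomy}: for $0 \le j \le m$ one has $a_j = A_j(b_0^2)$, with $A_j$ a polynomial in $t := b_0^2$ of degree $j$; and for $0 \le k \le m$ one has $b_k = b_0\, B_k(b_0^2)$, with $B_k$ a polynomial in $t$ of degree $k$. Equivalently, $a_j$ is a polynomial in $b_0$ of degree $2j$ using only even powers, while $b_k$ is a polynomial in $b_0$ of degree $2k+1$ using only odd powers.

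The base $j = k = 0$ is trivial since $a_0 = 1$ and $b_0 = b_0 \cdot 1$. For the inductive step, one processes the coefficients in the alternating order $a_1, b_1, a_2, b_2, \ldots, a_m, b_m$. Relation $(2j-1)$ of $(L_m)$ solves for $a_j$ as a linear combination of $a_{j-1}$ (a polynomial of degree $j-1$ in $t$ by the inductive hypothesis) and $b_0 b_{j-1} = b_0^2 B_{j-1}(b_0^2) = t\, B_{j-1}(t)$ (a polynomial of degree $j$ in $t$), so $a_j = A_j(t)$ with $\deg A_j \le j$; comparing leading coefficients yields $\alpha_j = -\tfrac{3}{2j}\beta_{j-1}$, where $\alpha_j$ and $\beta_k$ denote the leading coefficients of $A_j$ and $B_k$. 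Analogously, relation $(2k)$ solves for $b_k$ as a combination of $b_{k-1}$ and $b_0 a_k$, both of which are $b_0$ times a polynomial in $t$ of degree at most $k$, so $b_k = b_0\, B_k(t)$ with $\deg B_k \le k$ and $\beta_k = \tfrac{3}{2k+3}\alpha_k$.

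Given the dichotomy, the last equation $(2m+1)$ of $(L_m)$, namely $a_m = b_0 b_m$, rewrites as $A_m(b_0^2) - b_0^2 B_m(b_0^2) = 0$, so setting $P_m(t) := A_m(t) - t\, B_m(t)$ yields precisely $P_m(b_0^2) = 0$. The recursions $\alpha_j = -\tfrac{3}{2j}\beta_{j-1}$ and $\beta_k = \tfrac{3}{2k+3}\alpha_k$, together with $\alpha_0 = \beta_0 = 1$, show inductively that every $\alpha_j$ and $\beta_k$ is nonzero; hence the coefficient of $t^{m+1}$ in $P_m$ equals $-\beta_m \ne 0$, and $P_m$ has degree exactly $m+1$. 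The only real obstacle in the argument is recognizing the parity pattern in $b_0$; once this is noticed, the two recurrences in $(L_m)$ manifestly preserve it, and the rest is routine bookkeeping.
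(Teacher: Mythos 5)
Your proposal is correct and follows essentially the same route as the paper: an induction establishing that $a_j=p_j(b_0^2)$ and $b_k=b_0\,q_k(b_0^2)$ with $\deg p_j=j$, $\deg q_k=k$, after which the last equation $(2m+1)$ becomes a degree-$(m+1)$ polynomial equation in $b_0^2$. Your explicit tracking of the leading coefficients via $\alpha_j=-\tfrac{3}{2j}\beta_{j-1}$, $\beta_k=\tfrac{3}{2k+3}\alpha_k$ is a welcome extra detail (the paper leaves the degree claim to an unspecified induction here and only records these recursions later, in Lemma \ref{lem:Lem4.1}), but it does not change the method.
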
   
                                                     

\begin{proof}  
Since $a_0=1$, the first equation (1) of  ($L_m$) is given as $2a_1 -(2m+3)=-3b_0^2$.
Hence   $a_1 = p_1(b_0^2)$, where   $p_1(t)= 2^{-1}\{ (2m+3) - 3t\}$.                                               
        From the second equation (2) of 
        ($L_m$) we see that $b_1$ takes the form
     $b_1 = b_0q_1(b_0^2)$, where $q_1(t)=(10)^{-1}(10m + 9 -9t)$. 
     Then, by induction, one can show that $a_j$, $1 \le j \le m$, and
    $b_k$, $1 \le k \le m$, are expressed as $a_j = p_j(b_0^2)$ and $b_k = b_0q_k(b_0^2)$ 
    with polynomial $p_j(t)$  of degree $j$ and 
    polynomial $q_k(t)$ of   degree $k$. Thus the equation
    $b_0b_m - a_m = 0$ becomes a polynomial equation $P_m(b_0^2) = 0$,
     where $P_m(t)$ is a polynomial of $t$
    of degree $m+1$.
\end{proof}                                                      

\vspace{4pt}

\begin{rem}                                                              
        The case $m = 1$ is discussed in
     \cite{AdamMuratoriNash1}.
 In this case 
\begin{align}    \label{eqn:AMN-3-SaUmRmk}  
\psi^{(1)}(x) =\langle x \rangle^{-5}   
\big\{  ( 1 -\frac{5}{3} |x|^2) {\mathbf 1}  
  + (\frac{5}{3} - |x|^2) \mathbf X \big\} \phi_0.   
\end{align}
\end{rem}                                                                 

\vspace{4pt}

\begin{prop}    
\label{prop:prop2.4}                                                      
        Let $m$ be a fixed nonnegative integer and 
let $a_n$ and $b_n$ be the coefficients in
     {\rm (\ref{1psim})} with $a_0=1$. Then we have
\begin{equation}\label{2amnbmn}                                                   
     \left(
     \begin{matrix}
           a_j \\
           \ \  \\
           b_j
     \end{matrix}
     \right)
          = K_{j}K_{j-1} \cdots K_2
     \left(
     \begin{matrix}
           a_1 \\
           \ \ \\
           b_1
     \end{matrix}
     \right) \q (j = 2, 3, \cdots, m),
\end{equation}
     where
\begin{equation}\label{2kmj}                                                        
       K_p =
     \left(
     \begin{matrix}
        \ds \frac{2m + 5 - 2p}{2p}               & \ds -\frac{3b_0}{2p}                \\
     \noalign{\vskip 4pt}
        \ds \frac{3(2m + 5 - 2p)b_0}{2p(2p + 3)} & \ds \frac{2p(2m + 2 -2p) - 9b_0^2}{2p(2p + 3)}
     \end{matrix}
     \right)
\end{equation}
     for $p = 2, 3, \cdots, m$, and
\begin{equation}\label{2am1bm1}                                                          
     \left(
     \begin{matrix}
        \ds a_1 \\
         \ \ \\
        \ds b_1
     \end{matrix}
     \right) =
     \left(
     \begin{matrix}
               \ds \frac{2m + 3 - 3b_0^2}2 \\
    \noalign{\vskip 4pt}
               \ds \frac{b_0}{10}(10m + 9 - 9b_0^2)
     \end{matrix}
     \right).
\end{equation}
\end{prop}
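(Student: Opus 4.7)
The plan is to derive the recurrence $(a_j,b_j)^{T}=K_p\,(a_{j-1},b_{j-1})^{T}$ by algebraically decoupling the two families of equations in $(L_m)$ and then iterate.

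First I would solve the odd-indexed equations $(2j-1)$ of $(L_m)$ explicitly for $a_j$, obtaining
$$a_j=\frac{2m+5-2j}{2j}\,a_{j-1}-\frac{3b_0}{2j}\,b_{j-1}.$$
This immediately gives the first row of the proposed matrix $K_p$ (with $p=j$). Next I would substitute this expression for $a_k$ into the even-indexed equation $(2k)$, which reads $(2k+3)b_k-(2m+2-2k)b_{k-1}=3b_0 a_k$. After a routine simplification (putting everything over the common denominator $2k(2k+3)$) the coefficient of $a_{k-1}$ becomes $\frac{3b_0(2m+5-2k)}{2k(2k+3)}$ and the coefficient of $b_{k-1}$ becomes $\frac{2k(2m+2-2k)-9b_0^2}{2k(2k+3)}$, which match the second row of $K_p$ in (\ref{2kmj}). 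So for each $p\ge 2$ one has the one-step recurrence $(a_p,b_p)^{T}=K_p\,(a_{p-1},b_{p-1})^{T}$.

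Next, I would handle the base case $j=1$ separately, because this is where the initial condition $a_0=1$ enters and the first-row formula for $a_1$ does not fit the same pattern (it is not $K_1$ applied to a pair $(a_0,b_0)^{T}$ in the same uniform sense). Substituting $a_0=1$ into equation $(1)$ gives $a_1=\frac{2m+3-3b_0^2}{2}$, and then equation $(2)$ together with this value of $a_1$ yields $b_1=\frac{b_0}{10}(10m+9-9b_0^2)$, which is exactly (\ref{2am1bm1}).

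Finally, the full formula (\ref{2amnbmn}) follows by a straightforward induction on $j\ge 2$: the step case is the one-step recurrence just derived, and the base case $j=2$ is $(a_2,b_2)^{T}=K_2(a_1,b_1)^{T}$. I expect no serious obstacle; the only delicate part is the algebraic simplification when substituting $a_k$ into the equation for $b_k$, but the target form of $K_p$ tells us exactly what to aim for, so it is bookkeeping rather than a genuine difficulty.
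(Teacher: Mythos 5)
Your proposal is correct and follows essentially the same route as the paper: solve the odd-indexed equation $(2j-1)$ for $a_j$, substitute into the even-indexed equation $(2k)$ to obtain the second row of $K_p$, treat $j=1$ separately using $a_0=1$ to get (\ref{2am1bm1}), and iterate the one-step recurrence to reach (\ref{2amnbmn}). No gaps.
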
                                                               

\vspace{4pt}

\begin{proof}   
We divide the proof into three steps.
                                                         
        (I) Let $2 \le j \le m$. It follows from the equation $(2k)$ in 
        the system  ($L_m$) with $k$ replaced by $j$
     that
\begin{equation} \label{2Lk}                                                              
         (2j+ 3)b_j - (2m + 2 -2j)b_{j-1} = 3b_0a_j
\end{equation}
From the equation $(2j-1)$ in ($L_m$) we have
\begin{equation*}
          2ja_j - (2m + 5 - 2j)a_{j-1} = - 3b_0b_{j-1},
\end{equation*}
     or
\begin{equation}\label{2Lj}                                                                
           a_j = \frac{(2m + 5 - 2j)a_{j-1} - 3b_0b_{j-1}}{2j}\,.
\end{equation}
Then we obtain from (\ref{2Lk}) and (\ref{2Lj})
\begin{multline*}
 \qquad  \quad   (2j + 3)b_j - (2m + 2 -2j)b_{j-1} \\
 \noalign{\vskip 4pt}
           = \frac{3b_0\big[(2m + 5 - 2j)a_{j-1} - 3b_0b_{j-1})\big]}{2j}\,,
                                                                                      \hskip 50pt
\end{multline*}
or
\begin{align} \label{2bj}   
    {}&   (2j + 3)b_j  \nonumber\\
 \noalign{\vskip 4pt}
      & \qquad = \frac{3b_0(2m + 5 - 2j)}{2j} a_{j-1} 
                + \frac{2j(2m + 2 - 2j) - 9b_0^2}{2j} b_{j-1}.  
\end{align}

\SP

  (II) From the equation $(2j-1)$ in ($L_m$) with $j = 1$
   we see that, by noting that $a_0 = 1$,
\begin{equation*}
        2a_1 - (2m + 3) = -3b_0^2,
\end{equation*}
and hence
\begin{equation}\label{2a1}                                                                 
          a_1 = \frac{2m + 3 - 3b_0^2}2\,.
\end{equation}
We have from the equation $(2k)$ in ($L_m$) with $k = 1$
\begin{equation*}
           5b_1 - (2m + 2 - 2)b_0 = 3b_0a_1,
\end{equation*}
which is combined with (\ref{2a1}) to yield
\begin{equation*}
           5b_1 - 2m b_0 = \frac{3b_0\big(2m + 3 - 3b_0^2\big)}2,
\end{equation*}
or
\begin{equation}\label{2b1}                                                                   
           b_1 = \frac{b_0\big(10m + 9 - 9b_0^2\big)}{10}\,.
\end{equation}
\SP

 (III) It follows from (\ref{2Lj}) and (\ref{2bj}) that
\begin{equation}\label{2ajbj}                                                                
     \left(
     \begin{matrix}
        \ds a_j \\
         \ \ \\
        \ds b_j
     \end{matrix}
     \right) = K_j
     \left(
     \begin{matrix}
        \ds a_{j-1} \\
         \ \ \\
        \ds a_{j-1}
     \end{matrix}
     \right) \q (j = 2, 3, \cdots, m)
\end{equation}
with $K_j$ given by (\ref{2kmj}) with $p$ replaced by $j$. By using (\ref{2ajbj}) repeatedly
     we can obtain (\ref{2amnbmn}). As for $a_1$ and $b_1$, (\ref{2am1bm1}) is justified by
     (\ref{2a1}) and (\ref{2b1}).
\end{proof}

\begin{rem}  
Proposition \ref{prop:prop2.4} was used to construct a Maple program to find the
the polynomial  $P_m(t)$ as well as 
to solve 
the polynomial equation $P_m(t)=0$.   
     We have been able to handle the equations with the Maple
     program up to the case 
$m =26$.  The first six equations were listed up at the end of \S 1.

\end{rem}


\section{\bf Monotonicity of the sequence 
${\mathbf\{{\cR_m}\}_{m=1}^{\infty}}$}
\label{sec:Monotonicity}

We begin by 

\SP

\begin{defn}
 For each $m \in \mathbb N$, $\cR_m$ is defined to
 be the set of all the roots of the polynomial
equation $P_m(t)=0$, namely,
\begin{equation} \label{eqn:roots-2-def}
\cR_m := \big\{ \, t \in \mathbb C \; \big| \, P_m(t)=0 \, \big\}.
\end{equation}
\end{defn}

\BP

\begin{prop}                                                        
\label{prop:Prop3.1}  
        Let $\cR_m$ be as above. Then we have
\begin{equation}\label{3increas}                                            
         \cR_1 \subset \cR_2 \subset \cdots \subset \cR_m \subset \cdots \,,
\end{equation}
     i.e., the sequence $\{ \cR_m \}_{m=1}^{\infty}$ is increasing with $m$.
\end{prop}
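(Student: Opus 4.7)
The plan is to exploit Proposition \ref{prop:Prop2.1}(ii) twice---at levels $m$ and $m+1$---by observing that a single spinor admits two distinct ansatz representations, linked by the elementary identity $\langle x \rangle^{-(3+2m)} = \langle x \rangle^{-(5+2m)}(1 + |x|^2)$.

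Given $t_0 \in \cR_m$, first pick $b_0 \in \C$ with $b_0^2 = t_0$. By Proposition \ref{prop:Prop2.2} there is a corresponding solution $\{a_n^{(m)}\}_{n=0}^m$, $\{b_n^{(m)}\}_{n=0}^m$ of $(L_m)$ with $a_0^{(m)} = 1$, and Proposition \ref{prop:Prop2.1}(ii) guarantees that the associated $\psi^{(m)}$ from (\ref{1psim}) satisfies
\begin{equation*}
(\sigma \cdot D)\,\psi^{(m)} = \frac{3b_0}{\langle x \rangle^2}\,\psi^{(m)}.
\end{equation*}
Next, pulling a factor $1 + |x|^2$ out of the prefactor and into the bracketed polynomial, I would rewrite $\psi^{(m)}$ in the ansatz form at level $m+1$, with coefficients
\begin{equation*}
\tilde a_0 = 1, \quad \tilde a_n = a_n^{(m)} + a_{n-1}^{(m)}\ (1 \le n \le m), \quad \tilde a_{m+1} = a_m^{(m)},
\end{equation*}
and analogously $\tilde b_0 = b_0$, $\tilde b_n = b_n^{(m)} + b_{n-1}^{(m)}$ for $1 \le n \le m$, $\tilde b_{m+1} = b_m^{(m)}$.

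Since the underlying spinor is unchanged, it still solves the same Loss--Yau equation with constant $3b_0 = 3\tilde b_0$, but is now written in the level-$(m+1)$ ansatz form with $\tilde a_0 = 1$. Invoking Proposition \ref{prop:Prop2.1}(ii) at level $m+1$ then forces $\{\tilde a_n, \tilde b_n\}$ to satisfy the system $(L_{m+1})$; its last equation, via Proposition \ref{prop:Prop2.2}, reads $P_{m+1}(\tilde b_0^2) = P_{m+1}(t_0) = 0$, so $t_0 \in \cR_{m+1}$ as desired.

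The only real conceptual obstacle is noticing that $\psi^{(m)}$ admits this alternative level-$(m+1)$ representation; once this is spotted, Proposition \ref{prop:Prop2.1}(ii) does all the remaining work and one bypasses any line-by-line algebraic verification that the shifted coefficients $\{\tilde a_n, \tilde b_n\}$ satisfy $(L_{m+1})$ in terms of the equations of $(L_m)$.
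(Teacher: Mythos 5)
Your proposal is correct and follows essentially the same route as the paper: rewrite $\psi^{(m)}$ in the level-$(m+1)$ ansatz by absorbing a factor $\langle x \rangle^{2}=1+|x|^{2}$ into the bracketed polynomials (your shifted coefficients $\tilde a_n,\tilde b_n$ coincide exactly with the paper's (\ref{3coeff})), observe that the spinor still satisfies the same Loss--Yau equation with constant $3b_0=3\tilde b_0$, and invoke the equivalence of Proposition \ref{prop:Prop2.1}(ii) at level $m+1$ to conclude $P_{m+1}(b_0^2)=0$. No discrepancies worth noting.
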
                                                      

\begin{proof}                                                    
        For $m = 1, 2, 3, \cdots,$ let $\psi^{(m)}(x)$ be given by (\ref{1psim}) and suppose
     that the coefficients of $\psi^{(m)}(x)$ satisfy the system ($L_m$), {\it i.e.},
     $\psi^{(m)}(x)$ is a solution to  the  equation (\ref{1WDeq}) and 
     hence a zero mode of the Weyl-Dirac operator $H_{A^{(m)}}$. 
     Thus $b_0^2\in\cR_m$. 
     
  We now rewrite
     $\psi^{(m)}(x)$ as
\begin{eqnarray}\label{3psim1}                                                        
    \hskip 30pt \psi^{(m)}(x)&=&\langle x \rangle^{-(3+2(m+1))}
            \Big[
       \langle x \rangle^2 \Big(\sum_{n=0}^{m} a_n|x|^{2n}\Big) {\mathbf 1} 
       +   \langle x \rangle^2 \Big(\sum_{n=0}^{m} b_n|x|^{2n}\Big)
       {\mathbf X}\Big]
       \phi_0    \notag \\
       &=:& \wpsi^{(m+1)}(x). 
\end{eqnarray}
     By using the definition $\langle     x \rangle  ^2 = 1 + |x|^2$, we obtain
\begin{equation}\label{3psim2}                                                        
    \wpsi^{(m+1)}(x) = \langle x \rangle^{-(3+2(m+1))}
\Big[\Big(\sum_{n=0}^{m+1} \wa_n|x|^{2n}\Big){\mathbf 1} 
                   + \Big(\sum_{n=0}^{m+1} \wb_n|x|^{2n}\Big)
       {\mathbf X}\Big]\phi_0,
\end{equation}
     where
\begin{equation}\label{3coeff}                                                          
    \begin{cases}
         \wa_0 = a_0=1, \\
         \wa_n = a_{n-1} + a_{n}\q (1 \le n \le m), \\
         \wa_{m+1} = a_m, \\
         \wb_0 = b_0, \\
         \wb_n = b_{n-1} + b_n \q (1 \le n \le m),\\
         \wb_{m+1} = b_m.
    \end{cases}
\end{equation}
     Therefore, noting that $\psi^{(m)}(x)$ satisfies the equation
\begin{equation*}
 (\sigma\cdot D)\psi^{(m)}(x) =   3b_0 \langle x \rangle^{-2}\psi^{(m)}(x)
\end{equation*}
     and that $\wb_0 = b_0$
     by (\ref{3coeff}), we see that
\begin{eqnarray}\label{eq1}                                                          
    (\sigma\cdot D)\wpsi^{(m+1)}(x) &=& (\sigma\cdot D)\psi^{(m)}(x)  \notag\\
                                    & = &3b_0\langle     x \rangle  ^{-2}\psi^{(m)}(x) \\
              &=& 3\wb_0 \langle x \rangle^{-2}\wpsi^{(m+1)}(x).         \notag
\end{eqnarray}
     Thus, since $\wpsi^{(m+1)}(x)$ is a zero mode, we see that the coefficients $\wa_n$ and
     $\wb_n$ satisfy the system ($L_{m+1}$) which is ($L_m$) in Proposition \ref{prop:Prop2.1}(ii) with
     $m$ replaced  by $m + 1$. Thus we have the system of (2m + 3) equations for
\begin{equation*}
\begin{cases}
       (2j - 1) \ \ \ \  2j\wa_{j} - (2m + 7 - 2j)\wa_{j-1} = -
3\wb_0\wb_{j-1}  \\
       \hskip 192pt                                    (1 \le j \le m + 1, \ \wa_0 = 1), \\
       (2k) \ \ \ \ \ \ \ \ \ (2k + 3)\wb_k - (2m + 4 - 2k)\wb_{k-1} = 3\wb_0\wa_k \\
       \hskip 230pt                                                 (1 \le k \le m + 1), \\
       (2m + 3) \ \ \ \  \wa_{m+1} = \wb_0\wb_{m+1}.
\end{cases}
\end{equation*}
     Therefore $b_0 = \wb_0$ satisfies the polynomial equation $P_{m+1}(b_0^2) = 0$.
 \end{proof}

\begin{rem}  \label{rem:Rem3.3}                                                         
        Proposition \ref{prop:Prop3.1} above does not give
        us enough information to determine the set $\cR_m$ 
though it significantly clarifies
     the situation. In fact, we know that
\begin{equation}  \label{eqn:induction1}
\cR_m= \Big\{ \, 1, \, 
\Big( \frac{5}{3} \Big)^2, \, \Big( \frac{7}{3} \Big)^2, \,
\cdots, \, \Big( \frac{2m+3}{3} \Big)^2 \Big\}
\end{equation}
for $m=1$, $\cdots$, $6$. 
We also know that the polynomial $P_m(t)$ is of degree $m+1$.
Therefore Proposition \ref{prop:Prop3.1}, together with 
these two facts, tells us 
that we can prove (\ref{eqn:induction1}) by induction on $m$.
In fact,
assuming that (\ref{eqn:induction1}) with $m$ replaced by
$m-1$ is true,
 we only have to show that 
\begin{equation}   \label{eqn:induction2}
\Big( \frac{2m+3}{3}  \Big)^2 \in \cR_m
\end{equation}
for every $m \ge 2$ (actually $m\ge 7)$.
\end{rem}                                                             

\BP


\section{\bf Construction  of zero modes}             
\label{sec:Conjecture}          

We are going to prove that (\ref{eqn:induction1}) is true for 
every $m \in \mathbb N$, and describe how to construct
the sequence $\{ \psi^{(m)}\}_{m=1}^{\infty}$ of zero modes
in terms of a root of the polynomial equation $P_m(b_0^2)=0$.

Recall that in Proposition \ref{prop:Prop2.2}  we saw that
$a_j=p_j(b_0^2)$ and $b_k=b_0 q_k(b_0^2)$ with the polynomials
$p_j(t)$ and $q_k(t)$  of degrees $j$ and $k$ respectively, where
$1 \le j, \, k \le m$. 
Let 
\begin{equation}
\begin{cases}
c_j:= p_j(0)  \\
d_k:= \ds{\lim_{t \to \infty} \frac{q_k(t)}{t^k}}. 
\end{cases}
\end{equation}
In other words,
$c_j$ denotes the constant coefficient of the polynomial $p_j(t)$
and $d_k$ denotes the coefficient of $t^k$ in the polynomial $q_k(t)$,
of which degree is $k$.

\begin{lem}   \label{lem:Lem4.1}
 Let $m$ be a fixed nonnegative integer and 
let $a_n$ and $b_n$ be the coefficients in
     {\rm (\ref{1psim})} with $a_0=1$.                                                           
        Then we have
\begin{equation}\label{4cmcoeff}                                                 
         c_m = \frac{5 \cdot 7 \cdot 9 \cdots (2m + 3)}{2^m(m!)}
\end{equation}
     and
\begin{equation}\label{4dmcoeff}                                                 
        d_m = (-1)^m\frac{3^{2m}}{5 \cdot 7 \cdot 9 \cdots (2m + 3)2^m(m!)}.
\end{equation}
\end{lem}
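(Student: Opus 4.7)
The plan is to translate the system ($L_m$) from a recurrence on the scalars $a_j, b_k$ into a pair of \emph{polynomial} identities in the variable $t = b_0^2$, and then read off the constant terms and the leading terms separately. Substituting $a_j = p_j(t)$ and $b_k = b_0\, q_k(t)$ into the equations $(2j-1)$ and $(2k)$ of ($L_m$), and dividing the latter through by $b_0$, one obtains
\[
2j\, p_j(t) - (2m+5-2j)\, p_{j-1}(t) = -3t\, q_{j-1}(t),
\]
\[
(2k+3)\, q_k(t) - (2m+2-2k)\, q_{k-1}(t) = 3\, p_k(t),
\]
valid for $1 \le j, k \le m$, with the initial data $p_0(t) = q_0(t) = 1$ (so $c_0 = d_0 = 1$). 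The division by $b_0$ is legitimate thanks to Proposition \ref{prop:Prop2.2}, which guarantees that $b_0$ divides each $b_k$.

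For the formula for $c_m$, I would evaluate the first identity at $t = 0$. The factor $t$ on the right-hand side kills the $q_{j-1}$-contribution, leaving the scalar recurrence $2j\, c_j = (2m+5-2j)\, c_{j-1}$. Iterating from $c_0 = 1$ up to $j = m$ gives the telescoping product
\[
c_m = \prod_{i=1}^{m} \frac{2m+5-2i}{2i} = \frac{5 \cdot 7 \cdot 9 \cdots (2m+3)}{2^m\, m!},
\]
which is (\ref{4cmcoeff}).

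For the formula for $d_m$, I would compare the leading (degree-$j$, respectively degree-$k$) coefficients in the two polynomial identities. Writing $\alpha_j$ for the leading coefficient of $p_j$, the degree-$j$ part of the first identity gives $2j\, \alpha_j = -3\, d_{j-1}$ (the $p_{j-1}$ term has degree $j-1$ and drops out, while $t\, q_{j-1}(t)$ contributes $d_{j-1}$ at the top). The degree-$k$ part of the second identity gives $(2k+3)\, d_k = 3\, \alpha_k$ (the $q_{k-1}$ term having degree $k-1$ drops out). Eliminating $\alpha_k$ yields the decoupled recurrence
\[
d_k = -\frac{9}{2k\,(2k+3)}\, d_{k-1}, \qquad d_0 = 1,
\]
and iterating from $k=1$ to $k=m$ produces the sign $(-1)^m$, the numerator $9^m = 3^{2m}$, and the denominator $2^m\, m! \cdot 5 \cdot 7 \cdots (2m+3)$, matching (\ref{4dmcoeff}).

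There is no real obstacle here: the mechanism is simply that, inside the two polynomial identities, the recurrences for the constant coefficients $\{c_j\}$ and for the leading coefficients $\{d_k, \alpha_j\}$ fully decouple from one another, after which two short telescoping products finish both formulas. The only points worth verifying carefully are the polynomial division by $b_0$ used to derive the second identity and the initial values $c_0 = d_0 = 1$.
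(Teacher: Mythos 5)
Your proposal is correct and follows essentially the same route as the paper: both extract from equations $(2j-1)$ and $(2k)$ of ($L_m$) a decoupled recursion for the constant coefficients $c_j$ (by setting $t=0$, since the right-hand side $-3b_0b_{j-1}$ has no constant term) and for the leading coefficients $d_k$ via the intermediate leading coefficient of $p_j$, then telescope. Your packaging as explicit polynomial identities in $t$ with the initial data $c_0=d_0=1$ is only a cosmetic variant of the paper's argument, which starts the iteration at $c_1=(2m+3)/2$ and $d_1=-9/10$.
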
                                                              

\begin{proof}     
We divide the proof into two steps.

        (I) From the equation $(2j - 1)$ of ($L_m$) we have
\begin{equation*}
        2ja_j - (2m + 5 - 2j)a_{j-1} = -3b_0b_{j-1},
\end{equation*}
     where the right hand side has no constant coefficient 
      as a polynomial of $b_0$.
     Hence we have
\begin{equation*}
         2jc_j - (2m + 5 - 2j)c_{j-1} = 0.
\end{equation*}
Therefore we obtain recursive relations
\begin{equation}\label{4cj0rec}                                          
            c_j = \frac{2m + 5 - 2j}{2j}\,c_{j-1} \q (j =2, \cdots, m),
\end{equation}
which implies that
\begin{equation}\label{4cj0rec1}                                         
    c_j 
  = \frac{\big(2m+5-2j\big)\big(2m+5-2(j-1)\big)\cdots
  \big(2m+1\big)\big(2m+3\big)}{2^j(j!)},
\end{equation}
where we should note that $c_1 = (2m+3)/2$. 
We obtain (\ref{4cmcoeff}) by  setting $j = m$ in
     (\ref{4cj0rec1}).

\SP

 (II) Let $\wc_j$ be the coefficient of $t^j$ of $p_j(t)$. Then it follows  
 from the equation $(2j - 1)$
     of ($L_m$) that
\begin{equation*}
           2j\wc_j = -3d_{j-1}
\end{equation*}
 or
\begin{equation}\label{4djrec}                                            
           \wc_j = -\frac3{2j}\,d_{j-1}.
\end{equation}
On the other hand, from the equation $(2k)$ of ($L_m$)
 with $k$ replaced by $j$ we see that
\begin{equation*}
          (2j + 3)d_j = 3\wc_j.
\end{equation*}
 Thus we have
\begin{equation}\label{4djrec1}                                                      
            d_j = \frac{3}{2j + 3}\,\wc_j.
\end{equation}
It follows from (\ref{4djrec}) and (\ref{4djrec1}) that
\begin{equation*}
  d_j = -\frac{3}{2j + 3}\frac3{2j}\,d_{j-1} = -\frac{3^2}{2j(2j + 3)}\,d_{j-1}
\end{equation*}
     for $j = 2, \cdots, m$, which implies that
\begin{equation}\label{4djrec2}                                                        
    d_j = (-1)^j\frac{3^{2j}}{5 \cdot 7 \cdot 9 \cdots (2j + 3)2^j(j!)}\,,
\end{equation}
  where we should note that $d_1=(-9)/(5\cdot 2)$. 
 We  thus obtain (\ref{4dmcoeff}) by  setting $j = m$ in (\ref{4djrec2}).
\end{proof}

\begin{thm}  For each $m \in \mathbb N$, we have
\begin{equation}  \label{eqn:roots-1ume} 
\cR_m= 
\Big\{ 1, \, \Big( \frac{5}{3} \Big)^2, \, \Big( \frac{7}{3} \Big)^2, \,
\cdots, \, \Big( \frac{2m+3}{3} \Big)^2 \Big\}.
\end{equation}
\end{thm}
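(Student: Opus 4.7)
The plan is to prove the theorem by induction on $m$, exactly as indicated in Remark \ref{rem:Rem3.3}. The base case $m=1$ is immediate from the explicit factorization $9t^2-34t+25 = (t-1)(9t-25)$ displayed in Section 1. For the inductive step, suppose $\cR_{m-1} = \{1, (5/3)^2, \ldots, ((2m+1)/3)^2\}$. By Proposition \ref{prop:Prop3.1} the $m$ values of $\cR_{m-1}$ are roots of $P_m$, and since $\deg P_m = m+1$ by Proposition \ref{prop:Prop2.2}, there is exactly one more root (counted with multiplicity) to pin down.

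To identify this last root I would apply Vieta's formulas, for which I need the leading and constant coefficients of $P_m(t)$. Since $a_m = p_m(b_0^2)$ and $b_m = b_0\, q_m(b_0^2)$ with $\deg p_m = \deg q_m = m$, the equation $a_m - b_0 b_m = 0$ is
\begin{equation*}
P_m(t) = p_m(t) - t\, q_m(t),
\end{equation*}
so the constant term of $P_m$ is $p_m(0)=c_m$ and the leading coefficient is $-d_m$. Lemma \ref{lem:Lem4.1} gives closed-form expressions for both $c_m$ and $d_m$, so the product of all roots of $P_m$ is
\begin{equation*}
(-1)^{m+1}\cdot\frac{c_m}{-d_m} \;=\; \Big(\frac{5\cdot 7\cdots (2m+3)}{3^{m}}\Big)^2,
\end{equation*}
after cancelling the factorials and matching signs.

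On the other hand, the product of the inductively known roots is
\begin{equation*}
1\cdot\Big(\frac{5}{3}\Big)^2\cdots\Big(\frac{2m+1}{3}\Big)^2 \;=\; \Big(\frac{5\cdot 7\cdots (2m+1)}{3^{m-1}}\Big)^2.
\end{equation*}
Dividing, the missing root must equal $\big((2m+3)/3\big)^2$. Because this value is strictly larger than every element of $\cR_{m-1}$, it is genuinely new and in particular simple, so the $m+1$ roots of $P_m$ are precisely those in the claimed set, completing the induction.

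The main obstacle is essentially bookkeeping: reading off the correct leading and constant coefficients from the description $P_m(t)=p_m(t)-t\,q_m(t)$, and then keeping track of the signs $(-1)^{m+1}$ in Vieta's formula together with the $(-1)^m$ appearing in $d_m$ from Lemma \ref{lem:Lem4.1}. A minor ancillary point is ruling out the degenerate scenario in which one of the $\cR_{m-1}$-roots appears with multiplicity two in $P_m$ (in which case Vieta would force the double root to equal $\big((2m+3)/3\big)^2$, contradicting its absence from $\cR_{m-1}$); the strict inequality $((2m+3)/3)^2 > ((2m+1)/3)^2$ kills this case cleanly.
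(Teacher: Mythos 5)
Your proposal is correct and follows essentially the same route as the paper: induction using Proposition \ref{prop:Prop3.1} and $\deg P_m = m+1$, then pinning down the single remaining root via the product of the roots computed from the constant and leading coefficients supplied by Lemma \ref{lem:Lem4.1} (the paper phrases this as factoring $P_m(t)=d_m(t-\lambda_1)\cdots(t-\lambda_m)(t-\lambda)$ and evaluating at $t=0$, which is your Vieta step). The only difference is your opposite sign convention $P_m(t)=p_m(t)-t\,q_m(t)$ versus the paper's $t\,q_m(t)-p_m(t)$, which is immaterial, and your explicit remark on the possible multiplicity of the extra root, which the computed value $\big((2m+3)/3\big)^2\notin\cR_{m-1}$ settles just as you say.
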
                                                                 

\begin{proof}   
We prove the theorem by induction on $m$.
As was pointed out in Remark \ref{rem:Rem3.3},
we only have to show that
\begin{equation}   \label{eqn:induction4}
\Big( \frac{2m+3}{3}  \Big)^2 \in \cR_m
\end{equation}
for every $m \ge 2$,
assuming that (\ref{eqn:roots-1ume}) with $m$ replaced by
$m-1$ is true.

Let us recall that
\begin{equation}  \label{eqn:induction5}
         b_0b_m - a_m =b_0^2 \, q_m(b_0^2) - p_m(b_0^2) = P_m(b_0^2),
\end{equation}
where $p_m(t)$ and $q_m(t)$ are polynomials of degree $m$.
It follows from (\ref{eqn:induction5}) that $d_m$ is equal
to the coefficient of $t^{m+1}$ of the polynomial $P_m(t)$,
of which degree is $m+1$. Also it 
 follows from (\ref{eqn:induction5}) that the constant
 coefficient of  $P_m(t)$ is given by  $-c_m$.

By hypothesis of the induction, we have
\begin{equation}  \label{eqn:roots-2ume} 
\cR_{m-1}= 
\Big\{ 1, \, \Big( \frac{5}{3} \Big)^2, \, \Big( \frac{7}{3} \Big)^2, \,
\cdots, \, \Big( \frac{2m+1}{3} \Big)^2 \Big\}.
\end{equation}
Since $\cR_{m-1} \subset \cR_m$ by Proposition \ref{prop:Prop3.1},
we see that
\begin{equation*}
1, \, \Big( \frac{5}{3} \Big)^2, \, \Big( \frac{7}{3} \Big)^2, \,
\cdots, \, \Big( \frac{2m+1}{3}  \Big)^2
\end{equation*}
are  the roots of $P_m(t)$.
For simplicity, we put
\begin{equation}\label{4lambda}                                                     
        \lambda_j = \Big(\frac{2j + 1}3\Big)^2 \q (j = 1, 2, \cdots, m).
\end{equation}
Since there exists  one more root $\lambda \in \cR_m$  of
$P_m(t)$, 
we find that
\begin{equation}\label{4compare}                                                           
    P_m(t) = d_m(t - \lambda_1)(t - \lambda_2) \cdots (t - \lambda_m)(t - \lambda).
\end{equation}
Noting that $P_m(0)= -c_m$,
 we get
\begin{equation*}
           d_m(-1)^{m+1}\lambda_1\lambda_2 \cdots \lambda_m\lambda = -c_m.
\end{equation*}
Hence, by using (\ref{4cmcoeff}), (\ref{4dmcoeff}) and 
(\ref{4lambda}), we obtain
\begin{equation*}
   (-1)^{m+1}\Big[\prod_{j=1}^m \Big(\frac{2j + 1}3\Big)^2\Big]\,\lambda
                        = -\frac{c_m}{d_m}
                          = (-1)^{m+1}\prod_{j=1}^{m+1} \Big(\frac{2j + 1}3\Big)^2.
\end{equation*}
Therefore we can conclude that
\begin{equation*}
       \lambda = \Big(\frac{2m + 3}3\Big)^2,
\end{equation*}
     which implies  (\ref{eqn:induction4}).
\end{proof}

For each $m \in \mathbb N$, the polynomial (\ref{eqn:induction5}) has
$2m + 2$ roots:
\begin{equation*}
b_0= \pm 1, \, 
\pm\frac{5}{3},  \, \cdots, \, \pm\frac{2m+3}{3}.
\end{equation*}
If we choose the root $b_0=+(2j+1)/3$ for a fixed $j$ 
with $1 \le j \le m+1$, then 
we
can define $a_1$, $\cdots$, $a_m$, $b_1$, $\cdots$, $b_m$
by Proposition \ref{prop:prop2.4}.
With these $a_1$, $\cdots$, $a_m$, $b_1$, $\cdots$, $b_m$ obtained,
we construct 
$\psi_{j,+}^{(m)}(x)$ by (\ref{1psim}).
It follows from Propositions \ref{prop:Prop2.1} and \ref{prop:LY}
that $\psi_{j,+}^{(m)}$ is a zero mode
of the magnetic Dirac operator 
$H_{A^{(m)}_{j,+}}:=\sigma \cdot (D - A_{j,+}^{(m)})$,
where 
$A_{j,+}^{(m)}$ is defined by (\ref{eqn:LossYau1})
with
\begin{equation*}
h(x)=\frac{2j+1}{\langle x \rangle^2}, \quad
\psi(x) = \psi_{j,+}^{(m)}(x).
\end{equation*}

The sequence $\{ \psi^{(m)} \}_{m=1}^{\infty}$ constructed 
in Adam, Muratori and Nash \cite{AdamMuratoriNash1} 
is now obtained by putting
\begin{equation*}
\psi^{(m)}(x) := \psi_{m+1,+}^{(m)}(x).
\end{equation*}

We make a few of concluding remarks.

\begin{itemize}
\item[(i)]
For each $m \in \mathbb N$,  set
\begin{equation}\label{4psim}                                                         
   \Psi_m = \big\{ \psi_{j,+}^{(m)}(x) \; \big| \;
    j = 1, 2, \cdots, m + 1 \} .
\end{equation}
Adam, Muratori and Nash \cite{AdamMuratoriNash1} pointed out 
that 
\begin{equation*}
\Psi_1 \subsetneqq \Psi_2 \subsetneqq  
\cdots \subsetneqq \Psi_m  \subsetneqq \cdots.
\end{equation*}
\item[(ii)]
In a similar manner, 
choosing  the root $b_0=-(2j+1)/3$,  we can construct 
a different sequence of zero modes from 
$\{ \psi^{(m)} \}_{m=1}^{\infty}$ defined above. 
\end{itemize}

\vspace{15pt}

\textbf{Acknowledgments.} T.U. would like to thank
Ryuichi Ashino for his help with our Maple program,
and thank 
Takeshi Usa for his valuable comment  
on the solvability of the Adam-Muratori-Nash polynomials
that Galois theory seems to play an important role
 behind the scene.

\vspace{10pt}

\vskip 20pt


\end{document}